\documentclass{amsproc}
\usepackage{amssymb}
\usepackage{amsfonts}

\setcounter{MaxMatrixCols}{10}

\theoremstyle{plain}

\newtheorem{corollary}{Corollary}

\newtheorem{definition}{Definition}

\newtheorem{lemma}{Lemma}

\newtheorem{proposition}{Proposition}
\newtheorem{remark}{Remark}

\newtheorem{theorem}{Theorem}
\numberwithin{equation}{section}
\input{tcilatex}

\begin{document}
\title[Union property]{The Ubiquity of Sidon sets that are not $I_{0}$ }
\author{Kathryn E. Hare}
\address{Dept. of Pure Mathematics\\
University of Waterloo\\
Waterloo, Ontario\\
Canada N2L 3G1}
\email{kehare@uwaterloo.ca}
\thanks{This research is supported in part by NSERC \#44597}
\author{L. Thomas Ramsey}
\address{Dept. of Mathematics\\
University of Hawaii\\
Honolulu, HI., USA 96822}
\email{ramsey@math.hawaii.edu}
\subjclass[2000]{Primary 43A46}
\keywords{Sidon set, $I_{0}$ set, Kronecker set}
\thanks{This paper is in final form and no version of it will be submitted
for publication elsewhere.}

\begin{abstract}
We prove that every infinite, discrete abelian group admits a pair of $I_{0}$
sets whose union is not $I_{0}$. In particular, this implies that every such
group contains a Sidon set that is not $I_{0}$.
\end{abstract}

\maketitle

\section{Introduction}

A subset $E$ of a discrete abelian group $\Gamma $ with compact dual group $%
G $ is said to be a Sidon set if every bounded $E$-sequence can be
interpolated by the Fourier transform of a measure on $G$. If the measure
can be chosen to be discrete, $E$ is called an $I_{0}$ set. Finite sets in
any group $\Gamma $ and Hadamard sequences of integers are examples of these
interpolation sets in the group $\Gamma =\mathbb{Z}$.

Clearly, $I_{0}$ sets are Sidon. However, the converse is not true since the
class of Sidon sets is known to be closed under finite unions, but the class
of $I_{0}$ sets is not. Indeed, in \cite{Me}, M\'{e}la gave an example of a
pair of Hadamard sets in $\mathbb{Z}$ whose union is not $I_{0}$.

In this note we prove that every infinite, discrete abelian group admits a
pair of $I_{0}$ sets whose union is not $I_{0}$. Consequently, every such
group admits a Sidon set that is not $I_{0}$. Our method is constructive and
establishes even more: We prove that given any infinite subset $F\subseteq
\Gamma $ there are $I_{0}$ sets $E\subseteq F$ and $E^{\prime }\subseteq
F+F-F,$ whose union is not $I_{0}$ (but is, of course, Sidon). In fact, we
show that the sets $E,$ $E^{\prime }$ have stronger interpolation properties
than just $I_{0}$. These depend upon the algebraic properties of the initial
set $F$.

\section{Preliminaries}

Let $G$ be a compact abelian group and $\Gamma $ its discrete abelian dual
group. One example is $G=$ $\mathbb{T}$, the circle group of complex numbers
of modulus one, with dual group $\Gamma =$ $\mathbb{Z}$.

\begin{definition}
(i) A subset $E\subseteq \Gamma $ is said to be \textbf{Sidon} if for every
bounded function $\phi :E\rightarrow \mathbb{C}$ there is a measure $\mu $
on $G$ with $\widehat{\mu }(\gamma )=\phi (\gamma )$ for all $\gamma \in E$.
If the interpolating measure $\mu $ can always be chosen to be discrete,
then the set $E$ is said to be\textbf{\ }$I_{0}$.

(ii) A subset $E\subseteq \Gamma $ is said to be $\varepsilon $\textbf{%
-Kronecker} if for every $\phi :E\rightarrow \mathbb{T}$ there exists $x\in
G $ such that 
\begin{equation}
\left\vert \phi (\gamma )-\gamma (x)\right\vert <\varepsilon \text{ for all }%
\gamma \in E  \label{Kronecker}
\end{equation}%
and is called \textbf{weak }$\varepsilon $\textbf{-Kronecker }if the strict
inequality is replaced by $\leq $.
\end{definition}

Hadamard sets $E=\{n_{j}\}\subseteq \mathbb{N}$ with Hadamard ratio $q=\inf
n_{j+1}/n_{j}>1$ are $I_{0}$ sets \cite{KR}, thus both $\{3^{j}\}_{j\geq 1}$
and $\{3^{j}+j\}_{j\geq 1}$ are $I_{0}$ subsets of $\mathbb{Z}$. However, M%
\'{e}la, in \cite{Me}, proved that their union is not $I_{0}$. This shows
that not all Sidon sets in $\mathbb{Z}$ are $I_{0}$ since it is a deep
result, first obtained by Drury \cite{Dr}, with a later proof given by
Pisier \cite{Pi}, that any finite union of Sidon sets (in any group $\Gamma
) $ is Sidon. It is an open problem whether every Sidon set is a finite
union of $I_{0}$ sets.

Hadarmard sets with ratio $q>2$ are also known to be weak $\varepsilon $%
-Kronecker with $\varepsilon =$ $\left\vert 1-e^{i\pi (q-1)}\right\vert $.
In fact, every $\varepsilon $-Kronecker set is $I_{0}$ if $\varepsilon <%
\sqrt{2}$ \cite{GH1} and is Sidon if $\varepsilon <2$ \cite{HR}.

It is known that every infinite subset of $\Gamma $ contains infinite $I_{0}$
sets and if $\Gamma $ does not contain any elements of order 2, then every
infinite subset contains an infinite weak $1$-Kronecker set; c.f., \cite{GaH}%
, \cite{GHColloq}, \cite{GH13} and the references cited there. Of course, if 
$\Gamma $ consists of only elements of order two, then it does not contain
any $\varepsilon $-Kronecker set with $\varepsilon <\sqrt{2}$.

The main result of this paper is to show that \textit{every} infinite,
discrete abelian group admits a pair of $I_{0}$ sets that are weak $%
\varepsilon $-Kronecker for suitable $\varepsilon $, but whose union is not $%
I_{0}$. The number $\varepsilon $ can always be chosen to be at most $\sqrt{2%
}$ and often can be taken to be arbitrarily small, such as when $\Gamma $ is
torsion free. As far as we are aware, this is the first proof that every
infinite, discrete abelian group admits Sidon sets that are not $I_{0}$.
Knowing the existence of such sets is useful, for instance, in studying the
space of weakly almost periodic functions on $\Gamma $; c.f., \cite[p. 17]%
{FG}, as well as \cite{FGAdv} for further background.

Like M\'{e}la's original proof, our argument is constructive and relies upon
the well known Hartman/Ryll-Nardzewski characterization of $I_{0}$ sets in
terms of the Bohr topology on $\overline{\Gamma }$, the Bohr
compactification of $\Gamma $.

\begin{proposition}
\cite{HRN}\label{P:HRN} A subset $E\subseteq \Gamma $ is $I_{0}$ if and only
if whenever $E_{1}$ and $E_{2}$ are disjoint subsets of $E$, then $E_{1}$
and $E_{2}$ have disjoint closures in $\overline{\Gamma }$.
\end{proposition}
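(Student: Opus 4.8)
The plan is to prove the two implications separately, using the identification of discrete measures on $G$ with the Fourier algebra of the Bohr compactification. Recall that $\overline{\Gamma}=\widehat{G_{d}}$, where $G_{d}$ is $G$ with the discrete topology, so the characters of $\overline{\Gamma}$ are exactly the points $x\in G$, acting by $\chi\mapsto \chi(x)$. If $\mu=\sum_{n}c_{n}\delta_{x_{n}}$ is a discrete measure on $G$ (so $\sum_{n}|c_{n}|<\infty$), then $\widehat{\mu}(\gamma)=\sum_{n}c_{n}\gamma(x_{n})$ is the restriction to $\Gamma$ of $F_{\mu}(\chi)=\sum_{n}c_{n}\chi(x_{n})$, which is continuous on $\overline{\Gamma}$ since the series converges uniformly. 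Thus $E$ is $I_{0}$ precisely when every bounded $\phi\colon E\to \mathbb{C}$ is the restriction to $E$ of some such $F_{\mu}\in C(\overline{\Gamma})$.

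First I would dispatch the necessity direction, which is the easy half. Given disjoint $E_{1},E_{2}\subseteq E$, apply the $I_{0}$ hypothesis to the bounded function $\phi$ equal to $1$ on $E_{1}$ and $0$ on $E\setminus E_{1}$. This yields a discrete $\mu$ with $F_{\mu}$ continuous on $\overline{\Gamma}$, equal to $1$ on $E_{1}$ and $0$ on $E_{2}$. By continuity $F_{\mu}\equiv 1$ on $\overline{E_{1}}$ and $F_{\mu}\equiv 0$ on $\overline{E_{2}}$, so the two closures are disjoint.

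For sufficiency, I would first extract a purely topological consequence of the hypothesis: if all pairs of disjoint subsets of $E$ have disjoint closures in the compact Hausdorff space $\overline{\Gamma}$, then $E$ is $C^{*}$-embedded in its closure and $\overline{E}$ is canonically the Stone--\v{C}ech compactification $\beta E$ of the discrete set $E$. Concretely, applying the disjoint-closure property to super- and sublevel sets lets one build, by the usual Urysohn/dyadic approximation, a continuous extension of any bounded $\phi$ to $\overline{E}$, which Tietze then extends to some $\Phi\in C(\overline{\Gamma})$. The next reduction is functional-analytic: since $M_{d}(G)$ is a closed band in $M(G)$, a norm-convergent series of discrete measures is again discrete, so it suffices to prove the approximate interpolation property with uniform constants, namely that there exist $c<1$ and $K<\infty$ such that every $\phi$ in the unit ball of $\ell^{\infty}(E)$ is approximated within $c$ on $E$ by some $\widehat{\mu}$ with $\|\mu\|\le K$. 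Iterating such approximations geometrically then produces exact interpolation.

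The crux, and the step I expect to be the main obstacle, is bridging the previous two reductions: converting the continuous extension $\Phi\in C(\overline{\Gamma})$ into a discrete-measure transform that stays close to $\phi$ on $E$ with controlled norm. Uniform density of trigonometric polynomials (finite discrete measures) in $C(\overline{\Gamma})$ gives sup-norm approximation but no control on the measure norm $\|\mu\|$, which dominates the sup norm; so the naive geometric iteration in the Fourier algebra fails. The uniform constant $K$ must instead be produced by a compactness argument: were no $K$ to work, one would choose sign patterns $\phi_{n}$ forcing the interpolating norm to infinity, pass to a limit configuration governed by points of $\beta E=\overline{E}$, and derive a contradiction with the disjoint-closure hypothesis by taking a weak-$*$ limit of the associated extremal functionals. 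Making this extraction precise, so that the topological smallness of $E$ translates into a quantitative, norm-bounded separation, is the heart of the Hartman/Ryll-Nardzewski theorem and the place where essentially all the work resides.
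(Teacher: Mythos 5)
First, note that the paper does not prove this proposition at all: it is quoted as a known theorem and attributed to Hartman and Ryll-Nardzewski via the citation \cite{HRN}, so there is no internal proof to compare against. Judged on its own terms, your attempt gets the easy half right and sets up the hard half sensibly, but it does not actually prove the hard half. Your necessity argument is correct: the transform of a discrete measure $\sum_n c_n\delta_{x_n}$ with $\sum_n|c_n|<\infty$ extends to the continuous function $\chi\mapsto\sum_n c_n\chi(x_n)$ on $\overline{\Gamma}=\widehat{G_d}$, and interpolating the indicator of $E_1$ separates the closures. Your first reduction in the sufficiency direction is also sound: the disjoint-closures hypothesis is exactly the standard criterion for $\overline{E}$ to be $\beta E$, so every bounded $\phi$ on $E$ extends to $\Phi\in C(\overline{\Gamma})=AP(\Gamma)$.

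The genuine gap is the step you yourself flag as ``the heart of the theorem'': passing from a continuous extension $\Phi\in C(\overline{\Gamma})$ to exact interpolation by a \emph{discrete measure}. You correctly observe that Stone--Weierstrass gives sup-norm approximation by trigonometric polynomials with no control on the measure norm, so the naive geometric iteration fails; but the remedy you propose --- a proof by contradiction extracting a uniform constant $K$ from a weak-$*$ limit of ``extremal functionals'' over $\beta E$ --- is only a gesture, and it is not clear it can be made to work: there is no identified object whose failure to exist contradicts the disjoint-closure hypothesis, and the closed-graph/open-mapping route only controls the sup norm of the extension, not $\Vert\mu\Vert_{M(G)}$. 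The known proofs of this implication do real work here (e.g.\ reducing to interpolation of $\pm1$-valued or unimodular functions and then exploiting the Banach-algebra structure of restricted transforms of discrete measures, with products of finitely supported measures replacing norm-controlled sums); none of that is present in your sketch. So the proposal should be regarded as a correct proof of one implication plus a correct topological reformulation of the other, with the essential analytic content of the Hartman/Ryll-Nardzewski theorem still missing --- which is precisely the content the paper outsources to \cite{HRN}.
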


With this, one can quickly prove the following generalization of M\'{e}la's
argument.

\begin{lemma}
\label{Mela}Suppose that $0$ is a cluster point of $\{\chi
_{n}\}_{n=1}^{\infty }\subseteq \Gamma $ in the Bohr compactification of $%
\Gamma $. If the sets $E=\{\gamma _{n}\}\subseteq \Gamma $ and $E^{\prime
}=\{\gamma _{n}+\chi _{n}\}\subseteq \Gamma $ are disjoint, then $E\cup
E^{\prime }$ is not an $I_{0}$ set.
\end{lemma}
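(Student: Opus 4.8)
The plan is to apply the Hartman/Ryll-Nardzewski criterion (Proposition \ref{P:HRN}) in its contrapositive form: to conclude that $E\cup E^{\prime }$ fails to be $I_{0}$, it suffices to exhibit a single pair of disjoint subsets of $E\cup E^{\prime }$ whose closures in $\overline{\Gamma }$ share a common point. The obvious candidates are the two pieces $E=\{\gamma _{n}\}$ and $E^{\prime }=\{\gamma _{n}+\chi _{n}\}$ themselves, which are disjoint precisely by hypothesis. So the entire task reduces to producing a point $g\in \overline{\Gamma }$ lying simultaneously in $\overline{E}$ and $\overline{E^{\prime }}$.

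To find such a $g$, I would exploit the cluster-point hypothesis together with compactness of $\overline{\Gamma }$. Since $0$ is a cluster point of $\{\chi _{n}\}$ in $\overline{\Gamma }$, there is a subnet $(\chi _{n_{\alpha }})$ with $\chi _{n_{\alpha }}\rightarrow 0$. The corresponding net $(\gamma _{n_{\alpha }})$ lives in the compact space $\overline{\Gamma }$, hence has a convergent subnet; passing to it, I may assume $\gamma _{n_{\alpha }}\rightarrow g$ for some $g\in \overline{\Gamma }$ while retaining $\chi _{n_{\alpha }}\rightarrow 0$. Because $\overline{\Gamma }$ is a compact abelian topological group, addition is jointly continuous, so $\gamma _{n_{\alpha }}+\chi _{n_{\alpha }}\rightarrow g+0=g$ along the same net.

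The two limits now do the work: the net $(\gamma _{n_{\alpha }})$ lies in $E$ and converges to $g$, so $g\in \overline{E}$; and the net $(\gamma _{n_{\alpha }}+\chi _{n_{\alpha }})$ lies in $E^{\prime }$ and also converges to $g$, so $g\in \overline{E^{\prime }}$. Thus $g\in \overline{E}\cap \overline{E^{\prime }}$, the disjoint subsets $E,E^{\prime }\subseteq E\cup E^{\prime }$ fail to have disjoint closures, and Proposition \ref{P:HRN} forces $E\cup E^{\prime }$ to be non-$I_{0}$. The only point demanding care --- the ``hard part,'' such as it is --- is the net bookkeeping: since $\overline{\Gamma }$ need not be metrizable, one cannot simply pass to subsequences and must instead extract a single subnet along which $\chi $ tends to $0$ and $\gamma $ converges at once, then invoke continuity of the group operation to combine them. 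Everything else is immediate once this common limit point is identified.
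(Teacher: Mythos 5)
Your proposal is correct and follows essentially the same route as the paper's own proof: extract a subnet along which $\chi_{n_\alpha}\to 0$, pass to a further subnet (by compactness of $\overline{\Gamma}$) so that $\gamma_{n_\alpha}$ converges to some $g$, conclude $\gamma_{n_\alpha}+\chi_{n_\alpha}\to g$, and apply Proposition \ref{P:HRN} to the disjoint sets $E$ and $E'$ sharing the closure point $g$. No substantive differences.
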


\begin{proof}
Assume the subnet $\{\chi _{n_{\alpha }}\}$ (indexed by $\alpha $) converges
to $0$ in $\overline{\Gamma }$. Since $\overline{\Gamma }$ is compact, there
is a subnet (not relabelled) such that $\{\gamma _{n_{\alpha }}\}$ converges
to some $\gamma \in \overline{\Gamma }$. Since \{$\chi _{n_{\alpha }}\}$
also converges to $0$ along this subnet, it follows that $\{\gamma
_{n_{\alpha }}+\chi _{n_{\alpha }}\}$ converges to $\gamma $. But that means
the disjoint sets $E$ and $E^{\prime }$ both have $\gamma $ in their
closures. By Proposition \ref{P:HRN}, their union is not $I_{0}$.
\end{proof}

In the case of $\Gamma =\mathbb{Z}$, one can take $\{\chi _{n}\}=\mathbb{N}$%
. For the more general situation, we will take $\{\chi
_{n}\}=\tbigcup\limits_{m=1}^{\infty }H_{m}$, where the sets $H_{m}$ are
constructed in the following lemma.

\begin{lemma}
\label{clustering} If $F$ is any infinite subset of $\Gamma ,$ then there is
a countable subset $H\subseteq (F-F)\diagdown \{0\}$ which has $0$ as a
cluster point in the Bohr topology. Indeed, we can take $H=\tbigcup%
\limits_{m=1}^{\infty }H_{m}$ where for each positive integer $m$, $H_{m}$
is a finite subset of $(F-F)\diagdown \{0\}$ having the property that for
all $x_{1},...,x_{m}\in G$ there is some $\gamma \in H_{m}$ with 
\begin{equation*}
\sup_{1\leq j\leq m}\left\vert \gamma (x_{j})-1\right\vert <\frac{1}{m}\text{%
.}
\end{equation*}
\end{lemma}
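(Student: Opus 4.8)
The plan is to read off the structure of the Bohr topology and then reduce everything to a single pigeonhole count. Recall that $\overline{\Gamma}$ may be identified with the dual of $G$ taken discrete, so the Bohr topology on $\Gamma$ is that of pointwise convergence of characters on $G$; a basic neighborhood of $0$ in $\overline{\Gamma}$ is $\{\chi : |\chi(x_j)-1|<\epsilon,\ 1\le j\le k\}$ for finitely many $x_1,\dots,x_k\in G$ and $\epsilon>0$. Since $H\subseteq\Gamma$ and $\overline{\Gamma}$ is a compact, hence Hausdorff, group, ``$0$ is a cluster point of $H$'' is equivalent to the assertion that every such neighborhood contains a point of $H$ (which cannot equal $0$, as $0\notin H$). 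Thus it suffices to produce the finite sets $H_m$ with the stated approximation property and then set $H=\bigcup_{m\ge1}H_m$.

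To construct $H_m$, fix $m$ and put $\delta=1/m$. Cover $\mathbb{T}$ by finitely many arcs, say $L$ of them, each of diameter less than $\delta$; the products of these arcs partition $\mathbb{T}^m$ into at most $L^m$ ``boxes'', a number depending only on $m$. Because $F$ is infinite, I would choose a finite $F_0\subseteq F$ with $|F_0|>L^m$ and set $H_m=(F_0-F_0)\setminus\{0\}$, a finite subset of $(F-F)\setminus\{0\}$. Given any $x_1,\dots,x_m\in G$, the evaluation map $\gamma\mapsto(\gamma(x_1),\dots,\gamma(x_m))$ carries the $|F_0|>L^m$ points of $F_0$ into the $L^m$ boxes, so by pigeonhole two distinct $\gamma,\gamma'\in F_0$ land in the same box. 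Then for each $j$ the values $\gamma(x_j),\gamma'(x_j)$ lie in a common arc, and since $|\gamma'(x_j)|=1$ we get $|(\gamma-\gamma')(x_j)-1|=|\gamma(x_j)-\gamma'(x_j)|<\delta=1/m$. Hence $\gamma-\gamma'\in H_m$ witnesses the required inequality.

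It remains to check the cluster-point conclusion for $H=\bigcup_{m\ge1}H_m$, a countable subset of $(F-F)\setminus\{0\}$. Given a basic neighborhood of $0$ determined by $y_1,\dots,y_k\in G$ and $\epsilon>0$, I would choose $m\ge k$ with $1/m<\epsilon$ and apply the property of $H_m$ to the list $x_1=y_1,\dots,x_k=y_k$ padded out to length $m$ (by repeating $y_1$, say). This yields $\gamma\in H_m\subseteq H$ with $|\gamma(y_j)-1|<1/m<\epsilon$ for all $j\le k$, so $\gamma$ lies in the given neighborhood; by the Hausdorff remark above this makes $0$ a cluster point of $H$.

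The one genuinely delicate point is the uniformity required of $H_m$: a single finite set must approximate $0$ near every $m$-tuple $(x_1,\dots,x_m)$ simultaneously. The device that makes this work is to fix the finite reservoir $F_0$ first; the colliding pair depends on the $x_j$, but its difference always lands in the one fixed finite set $F_0-F_0$, and the number $L^m$ of pigeonholes is independent of the $x_j$. Everything else (the neighborhood description, the estimate $|(\gamma-\gamma')(x_j)-1|<\delta$, and the final cluster-point verification) is routine.
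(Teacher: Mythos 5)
Your proof is correct, but it reaches the finite sets $H_m$ by a genuinely different mechanism than the paper. The paper first uses compactness of $\overline{\Gamma}$ to produce a point $\phi$ every neighbourhood of which meets $F$ in an infinite set; for a given tuple $(x_1,\dots,x_m)$ it picks two elements of $F$ close to $\phi$ at those points, notes that the resulting difference works on a whole neighbourhood $W(f_1,f_2)$ of the tuple in $G^m$ by continuity of characters, and then invokes compactness of $G^m$ to extract a finite subcover; $H_m$ is the set of differences attached to that subcover. You instead fix in advance a finite reservoir $F_0\subseteq F$ with $|F_0|>L^m$, where $L$ is the number of arcs of diameter $<1/m$ needed to cover $\mathbb{T}$, and obtain the colliding pair for \emph{each} tuple by pigeonhole, so that $H_m=(F_0-F_0)\setminus\{0\}$ works uniformly. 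Your route is more elementary and more quantitative: it avoids both appeals to compactness (of $\overline{\Gamma}$ and of $G^m$), and it gives an explicit description and cardinality bound for $H_m$, namely any difference set of a sufficiently large finite subset of $F$. The paper's route is softer but generalizes with no extra effort to settings where one prefers to argue topologically. Two cosmetic points in your write-up: the $L^m$ boxes cover rather than partition $\mathbb{T}^m$ if the arcs overlap, so you should assign each point to a single box before counting; and the identity $|(\gamma-\gamma')(x_j)-1|=|\gamma(x_j)-\gamma'(x_j)|$ deserves the one-line justification that $\gamma'(x_j)$ has modulus one, which you do supply. Neither affects correctness.
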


\begin{proof}
To begin, we claim that there is some $\phi \in \overline{\Gamma }$ with the
property that if $V$ is an open neighbourhood of $\phi $, then $V\cap F$ is
infinite. If not, then for every $\phi \in \overline{\Gamma }$ there is an
open neighbourhood of $\phi $, say $U_{\phi }$, such that $U_{\phi }\cap F$
is finite. As the open sets $U_{\phi }$ cover $\overline{\Gamma }$ and $%
\overline{\Gamma }$ is compact, we can choose a finite subcover, $\{U_{\phi
_{j}}\}_{j=1}^{J}$. But that contradicts the assumption that $F$ is infinite
and hence proves the claim.

In particular, if $x_{1},...,x_{m}\in G$, then 
\begin{equation*}
V=\left\{ \gamma \in \overline{\Gamma }:\sup_{j=1,...,m}\left\vert \gamma
(x_{j})-\phi (x_{j})\right\vert <\frac{1}{2m}\right\}
\end{equation*}%
is a neighbourhood of $\phi $ and hence must contain infinitely many
elements from $F$. For each such collection $x_{1},...,x_{m}$, choose $%
f_{1}\neq f_{2}$ in $F$ such that $\left\vert f_{i}(x_{j})-\phi
(x_{j})\right\vert <1/(2m)$ for all $j=1,...,m$. Then $\left\vert
f_{1}(x_{j})-f_{2}(x_{j})\right\vert <1/m$ for $i=1,2$ and all $j=1,...,m$.
Since $f_{1}-f_{2}\in F-F\subseteq \Gamma ,$ and is therefore a continuous
character on $G$, there is a neighbourhood of $(x_{1},...,x_{m})\in G^{m}$,
denoted $W(f_{1},f_{2})$, such that if $(y_{1},...,y_{m})\in W(f_{1},f_{2})$%
, then 
\begin{equation*}
\sup_{j=1,...,m}\left\vert f_{1}(y_{j})-f_{2}(y_{j})\right\vert <\frac{1}{m}.
\end{equation*}%
As $G^{m}$ is compact, there are finitely many such neighbourhoods, $%
W(f_{1}^{(k)},f_{2}^{(k)})$, for $k=1,...,N_{m}$, so that $%
G^{m}=\tbigcup\limits_{k=1}^{N_{m}}W(f_{1}^{(k)},f_{2}^{(k)})$. The set 
\begin{equation*}
H_{m}=\{f_{2}^{(k)}-f_{1}^{(k)}:k=1,...,N_{m}\}
\end{equation*}
meets the requirements of the lemma.

The observation that $H=\tbigcup_{m=1}^{\infty }H_{m}$ clusters at $0$
follows directly from the fact that any neighbourhood of $0$ in the Bohr
topology contains a subset of the form $\{\gamma :\sup_{j=1,...,m}\left\vert
\gamma (x_{j})-1\right\vert <1/m\}$ for some positive integer $m$ and $%
x_{1},...,x_{m}\in G$.
\end{proof}

Before turning to the details of the proof of our main result, we list some
other elementary facts about Kronecker and $I_{0}$ sets which can be found
in \cite{GH13} and will be used in the proof of our main result.

\begin{lemma}
\label{tech}(i) Suppose $\pi :\Gamma \rightarrow \Lambda $ is a homomorphism
that is an injection on $E\subseteq \Gamma $. If $\pi (E)$ is weak $%
\varepsilon $-Kronecker (or $I_{0})$ as a subset of the group $\pi (\Gamma )$%
, then $E$ is weak $\varepsilon $-Kronecker (resp., $I_{0})$ as a subset of $%
\Gamma $.

(ii) Suppose $\Lambda $ is a subgroup of $\Gamma $ and that $E$ is weak $%
\varepsilon $-Kronecker (or $I_{0}$) as a subset of the group $\Lambda $.
Then $E$ is also weak $\varepsilon $-Kronecker (resp., $I_{0}$) as a subset
of $\Gamma $.
\end{lemma}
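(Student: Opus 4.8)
The plan is to prove both parts by Pontryagin duality, transporting the interpolating data---a single point of the dual group in the weak $\varepsilon$-Kronecker case, and a discrete measure in the $I_{0}$ case---along the continuous homomorphism of compact dual groups that is adjoint to $\pi$ (for (i)) or to the inclusion $\Lambda\hookrightarrow\Gamma$ (for (ii)). Recall that a homomorphism of discrete abelian groups dualizes to a continuous homomorphism of their compact duals, with injective maps dualizing to surjective ones and vice versa; this is essentially the only structural input needed.

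For (i), regard $\pi$ as a surjection $\Gamma\to\pi(\Gamma)$; its adjoint is a continuous homomorphism $\pi^{*}:\widehat{\pi(\Gamma)}\to\widehat{\Gamma}=G$ characterized by $\gamma(\pi^{*}(y))=(\pi\gamma)(y)$ for all $\gamma\in\Gamma$ and $y\in\widehat{\pi(\Gamma)}$. Suppose first that $\pi(E)$ is weak $\varepsilon$-Kronecker in $\pi(\Gamma)$, and let $\phi:E\to\mathbb{T}$ be given. Because $\pi$ is injective on $E$, the rule $\psi(\pi\gamma)=\phi(\gamma)$ defines a function $\psi:\pi(E)\to\mathbb{T}$, so there is $y\in\widehat{\pi(\Gamma)}$ with $|\psi(\lambda)-\lambda(y)|\le\varepsilon$ for all $\lambda\in\pi(E)$. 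Setting $x=\pi^{*}(y)$ gives $\gamma(x)=(\pi\gamma)(y)$, whence $|\phi(\gamma)-\gamma(x)|\le\varepsilon$ for every $\gamma\in E$. If instead $\pi(E)$ is $I_{0}$, then for bounded $\phi:E\to\mathbb{C}$ one interpolates the induced $\psi$ on $\pi(E)$ by a discrete measure $\mu$ on $\widehat{\pi(\Gamma)}$; its image measure $\nu$ under $\pi^{*}$ is a discrete measure on $G$ with $\widehat{\nu}(\gamma)=\widehat{\mu}(\pi\gamma)$, and restricting to $\gamma\in E$ recovers $\phi$.

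For (ii), the inclusion $\iota:\Lambda\hookrightarrow\Gamma$ dualizes to the restriction map $r:G=\widehat{\Gamma}\to\widehat{\Lambda}$, $r(x)=x|_{\Lambda}$, which is continuous and, since $\iota$ is injective, surjective. If $E$ is weak $\varepsilon$-Kronecker in $\Lambda$ and $\phi:E\to\mathbb{T}$ is given, choose $w\in\widehat{\Lambda}$ with $|\phi(\gamma)-\gamma(w)|\le\varepsilon$ for all $\gamma\in E$, and use surjectivity of $r$ to pick $x\in G$ with $x|_{\Lambda}=w$; then $\gamma(x)=\gamma(w)$ for $\gamma\in\Lambda$, so $x$ witnesses the weak $\varepsilon$-Kronecker property in $\Gamma$. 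For the $I_{0}$ case, write the interpolating discrete measure on $\widehat{\Lambda}$ as $\mu=\sum_{k}c_{k}\delta_{w_{k}}$ with $\sum_{k}|c_{k}|<\infty$, lift each atom to some $x_{k}\in G$ with $x_{k}|_{\Lambda}=w_{k}$, and set $\nu=\sum_{k}c_{k}\delta_{x_{k}}$; this is again a discrete measure, and $\widehat{\nu}(\gamma)=\sum_{k}c_{k}\gamma(x_{k})=\sum_{k}c_{k}\gamma(w_{k})=\widehat{\mu}(\gamma)=\phi(\gamma)$ for $\gamma\in E$.

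Neither part presents a serious obstacle; the content lies entirely in the duality bookkeeping. The two points that require care are verifying that the adjoint maps carry the claimed injectivity/surjectivity and intertwine the pairings as stated (for which I would appeal to the standard Pontryagin duality correspondence), and confirming that transporting a discrete measure---by image measure in (i) and by lifting atoms in (ii)---again yields a discrete measure of finite total variation. Once these are in place, the interpolation estimates transfer verbatim.
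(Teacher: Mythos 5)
Your argument is correct, and both parts go through as you describe. The paper itself offers no proof of this lemma --- it is stated as a pair of ``elementary facts \dots which can be found in \cite{GH13}'' --- so there is no internal argument to compare against; your duality bookkeeping is exactly the standard way these facts are established. Two small points worth pinning down explicitly if you write this up. First, in (i) you do not actually need any injectivity/surjectivity properties of the adjoint: the map $\pi ^{\ast }(y)=y\circ \pi $ is simply an element of $G=\widehat{\Gamma }$ satisfying $\gamma (\pi ^{\ast }(y))=(\pi \gamma )(y)$, and that intertwining identity is all the argument uses (both to transport the witnessing point and to compute $\widehat{\nu }(\gamma )=\widehat{\mu }(\pi \gamma )$ for the image of a discrete measure, which is discrete with no increase in total variation). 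Second, in (ii) the surjectivity of the restriction map $G\rightarrow \widehat{\Lambda }$ is precisely the classical statement that every character of a subgroup of a discrete abelian group extends to the whole group, which rests on the divisibility of $\mathbb{T}$; citing that extension theorem directly is cleaner than invoking the general ``injections dualize to surjections'' slogan, which requires some care for general locally compact groups. With those two remarks made precise, the interpolation estimates transfer verbatim as you say.
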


\section{The Main Result}

\subsection{Statement and outline of the proof}

Here is the statement of our main result.

\begin{theorem}
\label{main}If $F$ is any infinite subset of a discrete abelian group $%
\Gamma $, there are countable, disjoint sets $E\subseteq F$ and $E^{\prime
}\subseteq F+F-F$ such that both $E$ and $E^{\prime }$ are $I_{0}$, but $%
E\cup E^{\prime }$ is not $I_{0}$. Furthermore, the sets $E,E^{\prime }$ can
be chosen to be weak $\varepsilon $-Kronecker for suitable $\varepsilon \leq 
\sqrt{2}$.
\end{theorem}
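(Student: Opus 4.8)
The plan is to combine Lemma \ref{Mela} with Lemma \ref{clustering} to force the union to fail $I_{0}$, while simultaneously arranging the constructions so that $E$ and $E^{\prime}$ are individually weak $\varepsilon$-Kronecker (and hence $I_{0}$ by the facts recorded in the introduction). Lemma \ref{clustering} hands us a set $H=\bigcup_{m}H_{m}\subseteq (F-F)\diagdown\{0\}$ clustering at $0$ in the Bohr topology, and I would enumerate its elements as $\{\chi_{n}\}_{n=1}^{\infty}$. The target is to select characters $\gamma_{n}\in F$ so that, writing $\chi_{n}=f-f'$ for suitable $f,f'\in F$, the translated set $\{\gamma_{n}+\chi_{n}\}$ lands in $F+F-F$; then Lemma \ref{Mela} immediately yields that $E\cup E^{\prime}$ is not $I_{0}$, provided $E=\{\gamma_{n}\}$ and $E^{\prime}=\{\gamma_{n}+\chi_{n}\}$ are disjoint.

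The core of the work is the simultaneous verification that $E$ and $E^{\prime}$ are weak $\varepsilon$-Kronecker. I would build both sets by a single recursive selection, choosing $\gamma_{n}$ one at a time so that at each stage the finite initial segments of $E$ and of $E^{\prime}$ satisfy a sufficiently strong separation/independence condition in the dual. Concretely, the interpolation facts from the introduction tell us that an $\varepsilon$-Kronecker condition can be certified by showing that for every target function $\phi$ into $\mathbb{T}$ there is a point $x\in G$ matching all characters to within $\varepsilon$; the standard way to guarantee this for an infinite set is to arrange the characters to be ``$\varepsilon$-Kronecker on every finite subset uniformly'', which in turn follows from an algebraic independence or rapid-growth condition inherited from $F$. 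The recursion must respect three constraints at once: keep $\gamma_{n}\in F$, keep $\gamma_{n}+\chi_{n}\in F+F-F$, and preserve the Kronecker estimate for both sequences; the freedom to choose $\gamma_{n}$ from the infinitely many available candidates in $F$ (guaranteed by the clustering argument of Lemma \ref{clustering}) is what makes all three compatible.

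The value of $\varepsilon$ and the precise Kronecker-type property will depend on the algebraic structure of $\Gamma$, as the theorem statement indicates. When $\Gamma$ is torsion free one expects to extract genuinely independent characters and push $\varepsilon$ arbitrarily small; in the presence of torsion, and especially elements of order $2$, the best achievable bound degrades to the stated $\varepsilon\le\sqrt{2}$, which is exactly the threshold below which $\varepsilon$-Kronecker forces $I_{0}$. I would therefore split the argument according to the torsion type of $F$, reducing each case to a model situation (via the homomorphism and subgroup transfer principles of Lemma \ref{tech}), so that the Kronecker estimate need only be verified in a concrete quotient or subgroup and then lifted back to $\Gamma$.

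The main obstacle I anticipate is the reconciliation step: arranging a single sequence $\{\gamma_{n}\}$ so that \emph{both} $\{\gamma_{n}\}$ and the shifted sequence $\{\gamma_{n}+\chi_{n}\}$ are weak $\varepsilon$-Kronecker. The shifts $\chi_{n}$ are forced upon us by the clustering requirement and are not under our control, so the recursion must choose each $\gamma_{n}$ to simultaneously control two different finite configurations. The delicate point is ensuring that the dual point $x$ witnessing the Kronecker condition for $E$ does not conflict with the one needed for $E^{\prime}$; managing this likely requires choosing $\gamma_{n}$ to grow (or to be independent) fast enough that the contribution of $\chi_{n}$ to the character values at the relevant $x$ is negligible, decoupling the two estimates. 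I expect the bookkeeping of this decoupling, together with the torsion case analysis, to be where the real effort lies.
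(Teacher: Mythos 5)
Your architecture matches the paper's: clustering set $H\subseteq(F-F)\setminus\{0\}$ from Lemma \ref{clustering}, M\'ela's argument (Lemma \ref{Mela}) to kill $I_{0}$ for the union, transfer via Lemma \ref{tech}, and a case split on the algebraic structure of $F$. But the proposal stops exactly where the proof begins: the two constructions that make $E$ and $E'$ \emph{simultaneously} weak $\varepsilon$-Kronecker are named as ``the main obstacle'' and then left unexecuted. The paper's actual mechanism is this. First, one invokes the structure theorem embedding $\Gamma$ into $\bigoplus_{\alpha}\Omega_{\alpha}$ with each $\Omega_{\alpha}=\mathbb{Q}$ or $C(p^{\infty})$, and the dichotomy is on whether some projection $\pi_{\alpha}(F)$ is infinite --- not directly on the torsion type of $F$. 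In the infinite-projection case, the decoupling you ask for is achieved by an ordering trick: the denominators are forced to grow so fast ($D(\lambda_{n+1})>qB_{n}!$ where $B_{n}$ dominates all earlier data \emph{including} $D(\pi_{\alpha}(\chi_{n+1}))$) that when the character $g\in\widehat{\mathbb{Q}}$ is extended from $\frac{1}{B_{n}!}\mathbb{Z}$ to $\frac{1}{B_{n+1}!}\mathbb{Z}$, the value $g(\pi_{\alpha}(\chi_{n+1}))$ is already determined while $g(\lambda_{n+1})$ can still be steered to within $\pi/q$ of any target; the already-fixed factor is simply absorbed into the target value $t_{n+1}$. Without this explicit use of the inverse-limit description of $\widehat{\mathbb{Q}}$ (or something equivalent), ``choose $\gamma_{n}$ to grow fast enough'' is not a proof.

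In the case where every projection is finite, no growth condition is available at all, and your plan has no fallback. The paper instead selects coordinates $\beta_{n}$ with $\pi_{\beta_{n}}(\chi_{m})=0$ for $m\leq n$ and $\pi_{\beta_{n}}(\lambda_{m})=0$ for $m<n$, making the projected sequences behave independently coordinate-by-coordinate. This exposes a second, sharper gap in your reasoning: you assert that $\varepsilon\leq\sqrt{2}$ ``is exactly the threshold below which $\varepsilon$-Kronecker forces $I_{0}$,'' but the cited implication requires the \emph{strict} inequality $\varepsilon<\sqrt{2}$. When $F$ projects only to elements of order $2$, the best Kronecker constant achievable is exactly $\sqrt{2}$, and weak $\sqrt{2}$-Kronecker alone does not yield $I_{0}$. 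The paper closes this hole by arranging $\Pi(E)$ and $\Pi(E')$ to be independent sets of elements of order $2$ and quoting separately that such sets are $I_{0}$. As written, your plan would fail to establish that $E$ and $E'$ are $I_{0}$ in precisely this case.
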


\begin{remark}
The choice of $\varepsilon $ will be clear from the proof and depends on
algebraic properties of $F$. As will be seen in the proof, in many
situations $\varepsilon $ can be chosen to be arbitrarily small.
\end{remark}

Since the union of any two Sidon sets is again Sidon, we immediately obtain
the following corollary.

\begin{corollary}
Every infinite, discrete abelian group admits a Sidon set that is not $I_{0}$%
.
\end{corollary}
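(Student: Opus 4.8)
The plan is to derive the corollary immediately from Theorem~\ref{main}, since the statement is essentially a repackaging of that result. First I would apply Theorem~\ref{main} with the choice $F=\Gamma$, which is legitimate precisely because $\Gamma$ is an infinite subset of itself. This yields disjoint sets $E\subseteq\Gamma$ and $E^{\prime}\subseteq\Gamma$, both $I_{0}$, whose union $E\cup E^{\prime}$ is not $I_{0}$.

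Next I would assemble the two standard facts already recorded in the introduction. On the one hand, every $I_{0}$ set is Sidon, so each of $E$ and $E^{\prime}$ is Sidon. On the other hand, by the Drury/Pisier theorem (\cite{Dr}, \cite{Pi}) the class of Sidon sets is closed under finite unions, so $E\cup E^{\prime}$ is Sidon. Combining these, $E\cup E^{\prime}$ is a Sidon set that is not $I_{0}$, which is exactly the assertion to be proved.

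I expect no genuine obstacle here: all the difficulty is concentrated in Theorem~\ref{main}, whose constructive argument supplies the $I_{0}$ sets with non-$I_{0}$ union. The corollary merely combines that construction with the closure of Sidon sets under finite unions. The only points worth stating explicitly are the admissibility of taking $F=\Gamma$ and the appeal to the (deep, but cited) Drury/Pisier result, rather than any fresh argument of our own.
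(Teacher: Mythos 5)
Your proposal is correct and is essentially the paper's own argument: the paper derives the corollary in one line by applying Theorem~\ref{main} (to any infinite subset of $\Gamma$, e.g.\ $F=\Gamma$) and invoking the Drury/Pisier theorem that a union of two Sidon sets is Sidon, exactly as you do. No gaps.
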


The remainder of the paper will be devoted to proving the theorem. Its proof
depends upon the general structure theory for abelian groups.

\begin{theorem}
(see \cite[p. 165]{GH13}) Given any discrete abelian group $\Gamma $, there
is an index set $\mathcal{I}$ such that $\Gamma $ is isomorphic to a
subgroup of 
\begin{equation*}
\Omega =\tbigoplus_{\alpha \in \mathcal{I}}\Omega _{\alpha },
\end{equation*}%
where for each $\alpha $ either $\Omega _{\alpha }$ $=\mathbb{Q}$ or there
is a prime number $p_{\alpha }$ such that $\Omega _{\alpha }=C(p_{\alpha
}^{\infty })$, the group of all $p_{\alpha }^{n}$-th roots of unity.
\end{theorem}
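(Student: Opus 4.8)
The plan is to deduce this from two classical facts: every abelian group embeds in a divisible group, and every divisible abelian group is a direct sum of copies of $\mathbb{Q}$ and Pr\"{u}fer groups $C(p^{\infty})$. The theorem then follows by composing the embedding $\Gamma \hookrightarrow D$ into a divisible hull with the decomposition of $D$.

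First I would construct a divisible group containing $\Gamma$. Realize $\Gamma$ as a quotient $\Gamma \cong F/K$ of a free abelian group $F = \bigoplus_{i \in \mathcal{J}} \mathbb{Z}$. The coordinatewise inclusion $\mathbb{Z} \hookrightarrow \mathbb{Q}$ embeds $F$ into the $\mathbb{Q}$-vector space $V = \bigoplus_{i \in \mathcal{J}} \mathbb{Q}$, which is divisible. Passing to quotients gives $\Gamma \cong F/K \hookrightarrow V/K$, and I call this last group $D$. Since a quotient of a divisible group is divisible, $D$ is a divisible group containing an isomorphic copy of $\Gamma$, so it suffices to decompose $D$.

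Next I would split off the torsion. Let $T$ be the torsion subgroup of $D$; the torsion subgroup of a divisible group is again divisible, and by Baer's criterion divisible abelian groups are exactly the injective objects in the category of abelian groups. An injective subgroup is always a direct summand, so $D \cong T \oplus C$ with $C \cong D/T$ torsion-free and divisible. A torsion-free divisible group is a $\mathbb{Q}$-vector space, so choosing a basis yields $C \cong \bigoplus_{\mathcal{A}} \mathbb{Q}$ for a suitable index set $\mathcal{A}$. It remains to decompose the divisible torsion group $T$. Primary decomposition writes $T = \bigoplus_{p} T_{p}$, where each $p$-primary component $T_{p}$ is itself divisible; the substantive step is to show each divisible $p$-group $T_{p}$ is a direct sum of copies of $C(p^{\infty})$. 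For this I would examine the socle $T_{p}[p] = \{x : px = 0\}$, an $\mathbb{F}_{p}$-vector space, fix a basis, and use divisibility to extend each basis element upward into a copy of $C(p^{\infty})$ inside $T_{p}$; a Zorn's lemma argument then shows these Pr\"{u}fer subgroups are independent and exhaust $T_{p}$, giving $T_{p} \cong \bigoplus C(p^{\infty})$ with one summand per basis vector. Assembling $C$ and the $T_{p}$ and relabeling into a single index set $\mathcal{I}$ produces the embedding of $\Gamma$ into $\Omega = \bigoplus_{\alpha \in \mathcal{I}} \Omega_{\alpha}$.

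The main obstacle is precisely this last classification of divisible $p$-groups: verifying that lifting a socle basis through the divisible tower yields independent Pr\"{u}fer subgroups whose sum is all of $T_{p}$ requires the maximality argument and a careful independence check, whereas the passage to a divisible hull and the splitting of the torsion subgroup are comparatively routine once the injectivity of divisible groups is in hand.
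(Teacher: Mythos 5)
Your proof is correct: this is precisely the classical structure-theory argument (embed $\Gamma$ in a divisible hull via a free presentation, split off the torsion subgroup using injectivity of divisible groups, and decompose the torsion-free part as a $\mathbb{Q}$-vector space and each primary component as a direct sum of Pr\"{u}fer groups $C(p^{\infty})$). The paper itself gives no proof, citing \cite[p. 165]{GH13}, and the standard proof found there and in references such as Hewitt--Ross or Fuchs is exactly the route you take, including the socle-basis and Zorn's lemma step you correctly identify as the substantive part.
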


Throughout the remainder of the paper $\pi _{\alpha }$ will denote the
projection from $\Omega $ onto the factor group $\Omega _{\alpha }$. Our
proof of Theorem \ref{main} will be constructive and will depend on the
following two cases:

Case 1: There is some index $\alpha \in \mathcal{I}$ such that $\pi _{\alpha
}(F)$ is infinite. This case will be handled by Lemma \ref{L:caseQ} when $%
\Omega _{\alpha }=\mathbb{Q}$ and Lemma \ref{L:caseCPinfinity} when $\Omega
_{\alpha }=C(p_{\alpha }^{\infty })$. We will see that we can even arrange
for the sets $E$ and $E^{\prime }$ to be $\varepsilon $-Kronecker for any
given $\varepsilon >0$. Making the choice with $\varepsilon <\sqrt{2}$
ensures $E,$ $E^{\prime }$ are both $I_{0}$.

Case 2: Otherwise, $\pi _{\alpha }(F)$ is finite for all indices $\alpha \in 
\mathcal{I}$ and then there must be an infinite subset $\mathcal{J\subseteq I%
}$ such that for each $\alpha \in \mathcal{J}$ there is some $\lambda \in F$
with $\pi _{\alpha }(\lambda )\neq 0$. The existence of such an infinite
subset of indices allows us to either construct sets $E$, $E^{\prime }$ that
are weak $\varepsilon $-Kronecker for some $\varepsilon <\sqrt{2}$ (and
hence $I_{0})$ or to construct sets $E$, $E^{\prime }$ that are both
independent (and hence $I_{0})$ and weak $\sqrt{2}$-Kronecker. The choice of
construction depends on the orders of the non-zero characters $\pi _{\alpha
}(\lambda )$. This argument can be found in Lemma \ref{L:many factors}.

In both cases, the two sets we construct will be disjoint and have the form $%
E=\{\gamma _{n}\}$, $E^{\prime }=\{\gamma _{n}+\chi _{n}\}$ where $\{\chi
_{n}\}$ clusters at $0$. Thus the fact that $E\cup E^{\prime }$ is not $%
I_{0} $ will follow immediately from Lemma \ref{Mela}.

We now turn to handling these two cases.

\subsection{Proof of the Theorem in Case 1}

\begin{lemma}
\label{L:caseQ} Suppose there exists an index $\alpha \in \mathcal{I}$ such
that $\pi _{\alpha }(F)$ is infinite and $\Omega _{\alpha }=\mathbb{Q}$.
Given any $\varepsilon >0,$ there are infinite disjoint sets $E\subset F$
and $E^{\prime }\subset F+F-F$ such that $E$ and $E^{\prime }$ are weak $%
\varepsilon $-Kronecker and $I_{0}$, but $E\cup E^{\prime }$ is not $I_{0}$.
\end{lemma}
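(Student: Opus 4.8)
The plan is to build $E$ and $E'$ in exactly the shape required by Lemma \ref{Mela}, so that non-$I_0$-ness of the union is automatic, and then to force the Kronecker (hence $I_0$) property by pushing everything down to the rational factor $\Omega_\alpha=\mathbb{Q}$. First I would apply Lemma \ref{clustering} to $F$ and fix an enumeration $\{\chi_n\}_{n\ge 1}$ of a countable set $H\subseteq (F-F)\setminus\{0\}$ clustering at $0$ in the Bohr topology; these are the shifts. I then intend to choose $\gamma_n\in F$ and set $E=\{\gamma_n\}$ and $E'=\{\gamma_n+\chi_n\}\subseteq F+F-F$. Once the $\gamma_n$ are chosen so that $E$ and $E'$ are disjoint, Lemma \ref{Mela} immediately yields that $E\cup E'$ is not $I_0$. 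Thus all the work lies in choosing the $\gamma_n$ so that both $E$ and $E'$ are weak $\varepsilon$-Kronecker.

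The device for the Kronecker property is the projection $\pi_\alpha$. Writing $r_n=\pi_\alpha(\gamma_n)$ and $s_n=\pi_\alpha(\chi_n)$ in $\mathbb{Q}$, I will arrange that the two sequences $\{r_n\}$ and $\{r_n+s_n\}$ consist of distinct rationals and are lacunary in $\mathbb{Q}$, in the sense that they can be thinned from $\pi_\alpha(F)$ so as to be $\delta$-Kronecker for any prescribed $\delta>0$. The fact I would invoke is the rational analogue of the classical statement that a sufficiently fast-growing lacunary sequence of integers is $\delta$-Kronecker with $\delta\to 0$ as the growth rate increases: every infinite subset of $\mathbb{Q}$ contains, for each $\delta>0$, an infinite $\delta$-Kronecker subset of the subgroup it generates, proved by the usual greedy approximation of target phases together with the fact that characters of a subgroup of the divisible group $\mathbb{Q}$ extend to $\mathbb{Q}$. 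Granting this, since $\pi_\alpha$ is injective on $E$ (the $r_n$ being distinct), Lemma \ref{tech}(ii) promotes the Kronecker property from the subgroup generated by $\{r_n\}$ up to $\pi_\alpha(\Gamma)$, and Lemma \ref{tech}(i) then transfers it from $\pi_\alpha(E)\subseteq\pi_\alpha(\Gamma)$ up to $E\subseteq\Gamma$; the identical argument applied to $\{r_n+s_n\}$ handles $E'$.

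The selection of the $\gamma_n$ is recursive. Because $\pi_\alpha(F)\subseteq\mathbb{Q}$ is infinite, at stage $n$---with $\chi_n$ (hence $s_n$) already fixed and $\gamma_1,\dots,\gamma_{n-1}$ already chosen---I can take $\gamma_n\in F$ whose projection $r_n$ lies so far out along the lacunary pattern, relative to $s_n$ and to all previously chosen data, that $\{r_j\}_{j\le n}$ and $\{r_j+s_j\}_{j\le n}$ both keep the prescribed growth, while also ensuring that $\gamma_n$ and $\gamma_n+\chi_n$ avoid the finitely many elements already placed in $E'$ and $E$ respectively, which secures disjointness and distinctness. Fixing the target growth rate large enough at the start makes the resulting Kronecker constant smaller than both the given $\varepsilon$ and $\sqrt{2}$, so that $E,E'$ are weak $\varepsilon$-Kronecker and, by the quoted implication that $\varepsilon<\sqrt{2}$ forces $I_0$, are $I_0$ as well.

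I expect the genuine difficulty to be the tension between the two roles of the $\chi_n$: they must be drawn from the clustering set $H$ so that Lemma \ref{Mela} applies and the union fails to be $I_0$, yet the shifted projections $r_n+s_n$ must still be lacunary. This is what forces the bookkeeping in which each $r_n$ is chosen to dominate $s_n$ and the earlier data, and it is the point at which one must check that an infinite subset of $\mathbb{Q}$ of any type---unbounded values, or bounded values whose denominators tend to infinity along a lacunary set of exponents---can be thinned to a small-$\delta$-Kronecker sequence on which this domination is realizable, so that the perturbation by $s_n$ does not destroy the Kronecker property of $E'$.
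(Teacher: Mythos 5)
Your architecture matches the paper's: take $H=\{\chi_n\}\subseteq (F-F)\setminus\{0\}$ clustering at $0$ from Lemma \ref{clustering}, choose $\gamma_n\in F$ so that $E=\{\gamma_n\}$ and $E'=\{\gamma_n+\chi_n\}$ are disjoint, get non-$I_0$-ness of the union from Lemma \ref{Mela}, and pull the Kronecker property back through $\pi_\alpha$ via Lemma \ref{tech}. But the step you defer --- that both $V=\{r_n\}$ and the \emph{termwise shifted} set $V'=\{r_n+s_n\}$ can simultaneously be made weak $\varepsilon$-Kronecker --- is the entire content of the lemma, and the ``fact'' you invoke does not deliver it. A theorem saying that every infinite subset of $\mathbb{Q}$ contains an infinite $\delta$-Kronecker subset applies to a single set that you are free to thin; here $V'$ is not free: once $r_n$ is chosen, $r_n+s_n$ is forced, with $s_n=\pi_\alpha(\chi_n)$ running through the prescribed clustering set. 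You need a single character $g\in\widehat{\mathbb{Q}}$ to hit arbitrary targets on $V'$ (and another on $V$), i.e.\ a relative interpolation statement, not an extraction statement.

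Your proposed mechanism for this --- choose $r_n$ to ``dominate $s_n$ and the earlier data'' along a ``lacunary pattern'' --- is the right idea only in the special case where $\pi_\alpha(F)$ has bounded denominators, so that everything lives in a copy $\frac{1}{B!}\mathbb{Z}$ of $\mathbb{Z}$ and Hadamard growth of moduli gives the Kronecker constant; this is exactly the paper's first subcase. In the complementary subcase the denominators of elements of $\pi_\alpha(F)$ are unbounded, and the elements may well all be bounded in absolute value, so there is no lacunarity of moduli to exploit and ``domination'' is the wrong notion: what matters is growth of denominators. The paper handles this by explicitly building $g$ through the inverse-limit description of $\widehat{\mathbb{Q}}$ (a sequence $\{\omega_n\}\subseteq\mathbb{T}$ with $\omega_{n+1}^{n+1}=\omega_n$), choosing $D(\lambda_{n+1})>qB_n!$ where $B_n$ controls the denominators of all previously placed points and of $\pi_\alpha(\chi_{n+1})$. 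The point is that $g$ is already pinned down on $\frac{1}{B_n!}\mathbb{Z}$ (in particular at $\pi_\alpha(\chi_{n+1})$), but the remaining freedom in $\omega_{B_{n+1}}$ lets $g(\lambda_{n+1})$ be any of at least $z>q$ equally spaced roots of unity times a known factor, so one can land within $\pi/q$ of the corrected target $g(\pi_\alpha(\chi_{n+1}))^{-1}t_{n+1}$. Extending characters from a subgroup of $\mathbb{Q}$ by divisibility, as you suggest, gives existence of extensions but no quantitative control of how close $g(r_n)$ can be brought to a target once $g$ is constrained on the earlier data; that quantitative control is precisely what the denominator bookkeeping buys, and without it your argument does not close.
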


\begin{proof}
Let $H=\tbigcup_{m=1}^{\infty }H_{m}=\{\chi _{n}\}_{n=1}^{\infty }\subseteq
(F-F)\diagdown \{0\}$ be a set that clusters at $0,$ constructed in Lemma %
\ref{clustering}. Fix $0<\varepsilon <\sqrt{2}$ and assume we can find a
sequence of characters $\lambda _{n}\in \pi _{\alpha }(F)$ such that:

(a) $V=\{\lambda _{n}\}$ and $V^{\prime }=\{\lambda _{n}+\pi _{\alpha }(\chi
_{n})\}$ are weak $\varepsilon $-Kronecker sets in $\Omega _{\alpha };$ and

(b) For $n\neq n^{\prime }$ we have $\lambda _{n}\neq \lambda _{n^{\prime }}$%
, $\lambda _{n}\neq \lambda _{n^{\prime }}+\pi _{a}(\chi _{n^{\prime }})$,
and $\lambda _{n}+\pi _{\alpha }(\chi _{n})\neq \lambda _{n^{\prime }}+\pi
_{a}(\chi _{n^{\prime }})$.

Then, for each $\lambda _{n}$ choose some $\gamma _{n}\in F$ such that $\pi
_{\alpha }(\gamma _{n})=\lambda _{n}$. Set $E=\{\gamma _{n}\}\subseteq F$
and $E^{\prime }=\{\gamma _{n}+\chi _{n}\}\subseteq F+F-F$. By construction, 
$\pi _{\alpha }$ is one-to-one from $E$ to $V$ and one-to-one from $%
E^{\prime }$ to $V^{\prime }$. Condition (b) and the fact that $H$ consists
of nonzero characters implies that $E$ and $E^{\prime }$ consist of distinct
terms and are disjoint.

By Lemma \ref{tech}(i), $E$ and $E^{\prime }$ inherit the weak $\varepsilon $%
-Kronecker property from $V$ and $V^{\prime }$ respectively. Since $%
\varepsilon <\sqrt{2}$, both $E$ and $E^{\prime }$ are $I_{0}$. Furthermore,
because $0$ is a cluster point of the set $H$, Lemma \ref{Mela} implies that 
$E\cup E^{\prime }$ is not $I_{0}$.

Thus the proof of the lemma will be complete if we can construct a sequence
of characters satisfying the two conditions (a) and (b). This will be an
induction argument which depends on whether $\pi _{\alpha }(F$ $)$ is a
subset of a group isomorphic to $\mathbb{Z}$ or it is not.

First, suppose that there is some integer bound $B>0$ such that for all $%
\lambda \in \pi _{\alpha }(F)$, there are are integers $b>0$ and $a$ such
that $\pi _{\alpha }(\lambda )=a/b$ and $b\leq B$. Then $\pi _{\alpha }(F)$
is a subset of the (additive) subgroup $\frac{1}{B!}\mathbb{Z}$ of $\mathbb{Q%
}$. Because $H\subseteq F-F$, we also have $\pi _{\alpha }(H)\mathbb{\ }$%
contained in this subgroup.

Given $\varepsilon >0,$ choose an integer $q>2$ such that $\pi
/(q-1)<\varepsilon $. As $\pi _{\alpha }(F)$ is infinite, we may inductively
choose $\lambda _{n}\in \pi _{\alpha }(F),$ sufficiently large in modulus,
so that both $V$ and $V^{\prime }$ are Hadamard sequences in $\mathbb{Q}$
with Hadamard ratio $\geq q$ and condition (b) is satisfied$.$ By \cite[%
Prop. 2.2.6]{GH13}, $V$ and $V^{\prime }$ are both weak $\varepsilon $%
-Kronecker subsets of $\frac{1}{B!}\mathbb{Z}$ and by Lemma \ref{tech}(ii)
they are also both weak $\varepsilon $-Kronecker sets in $\Omega _{\alpha }$%
. Thus condition (a) is satisfied.

Otherwise, for every positive integer $B$, there is some $s/t\in \pi
_{\alpha }(F)$ with $t>B$ and $\gcd (s,t)=1$. (We will say $s/t$ is in
reduced form.) In this case we need to carefully account for the
denominators of rational numbers. Note that any $x\in \mathbb{Q}$ has a
unique reduced form, $s/t,$ and we will write $D(x)$ for the denominator $t$.

Given $\varepsilon >0,$ choose an integer $q>2$ such that $\pi
/q<\varepsilon $. Let $B_{0}=D(\pi _{\alpha }(\chi _{1}))$ and choose $%
\lambda _{1}\in \pi _{\alpha }(F)$ such that $D(\lambda _{1})>qB_{0}!$.
Assuming $\lambda _{1},...,\lambda _{n}$ have been inductively constructed
for $n\geq 1$, let 
\begin{equation*}
B_{n}=2\max \{D(\lambda _{i}),\,D(\pi _{\alpha }(\chi _{j})):\,1\leq i\leq
n,\,1\leq j\leq n+1\}.
\end{equation*}%
Now choose $\lambda _{n+1}\in $ $\pi _{\alpha }(F)$ so that $D(\lambda
_{n+1})>qB_{n}!$. This choice ensures that $\lambda _{i}$ and $\lambda
_{i}+\pi _{\alpha }(\chi _{i})$ for $1\leq i\leq n$, as well as $\pi
_{\alpha }(\chi _{n+1}),$ all belong to $\frac{1}{B_{n}!}\mathbb{Z}$, while $%
\lambda _{n+1}$ and $\lambda _{n+1}+\pi _{\alpha }(\chi _{n+1})$ are outside 
$\frac{1}{B_{n}!}\mathbb{Z}$. It follows that condition (b) will be
satisfied for $V$ and $V^{\prime }$.

We argue next that $V^{\prime }$ is $\varepsilon $-Kronecker in $\Omega
_{\alpha }$. To this end, let $\phi :V^{\prime }\rightarrow \mathbb{T}$, say 
$\phi (\lambda _{n}+\pi _{\alpha }(\chi _{n}))=t_{n}\in \mathbb{T}$. We need
to prove there is some character $g\in \widehat{\mathbb{Q}}$, the dual of $%
\mathbb{Q}$, such that 
\begin{equation}
\left\vert g(\lambda _{n}+\pi _{\alpha }(\chi _{n}))-t_{n}\right\vert
<\varepsilon \text{ for all \thinspace }n\text{.}  \label{charg}
\end{equation}%
As explained in \cite[25.5]{HewR}, elements of $\widehat{\mathbb{Q}}$ can be
identified with sequences $\{\omega _{n}\}\subset \mathbb{T},$ subject to
the constraints that $\omega _{n+1}^{n+1}=\omega _{n}$, with the
understanding that $g(1/n!)=\omega _{n}$. Clearly it will be sufficient to
satisfy the consistency condition 
\begin{equation}
\omega _{B_{n+1}}^{B_{n+1}!/B_{n}!}=\omega _{B_{n}},  \label{E:consistent}
\end{equation}%
provided that for $j\notin \{B_{n}\}$, say $B_{n}<j<B_{n+1},$ one specifies 
\begin{equation*}
\omega _{j}=\omega _{B_{n+1}}^{B_{n+1}!/j!}.
\end{equation*}

To start the specification of $g$, set $\omega _{B_{0}}=1$. This ensures
that if $k\in \mathbb{Z}$, then $g(k)=\omega _{B_{0}}^{kB_{0}!}$, hence $g(%
\mathbb{Z})=1$. (This will be helpful in the next lemma as it allows us to
interpret $g$ as a character on $\mathbb{Q}/\mathbb{Z}$.) Since $\pi
_{\alpha }(\chi _{1})=s/B_{0}!$ for some integer $s$, it follows that 
\begin{equation*}
g(\pi _{\alpha }(\chi _{1}))=\omega _{B_{0}}^{s}=1.
\end{equation*}%
By Equation (\ref{E:consistent}) one may choose $\omega _{B_{1}}$ to be any $%
J$-th root of unity, where $J=B_{1}!/B_{0}!$, in other words, one can choose
any integer $K\in \lbrack 0,J-1]$ and specify 
\begin{equation*}
\omega _{B_{1}}=e^{2\pi iK/J}.
\end{equation*}%
Because $\lambda _{1}\in \frac{1}{B_{1}!}\mathbb{Z}$, the reduced form of $%
\lambda _{1}$ is $s/t$ with $t$ dividing $B_{1}!$. Thus, with $y/z$ the
reduced form of $B_{0}!/t,$ we have 
\begin{equation*}
g(\lambda _{1})=\left( e^{2\pi iK/J}\right) ^{sB_{1}!/t}=e^{2\pi
iKsB_{0}!/t}=e^{2\pi iKsy/z}.
\end{equation*}%
Since $t=D(\lambda _{1})>qB_{0}!$, the reduced form of $B_{0}!/t$ is $y/z$
with $z>q$. Both $s$ and $y$ are relatively prime to $z$, hence the
exponential $e^{2\pi isy/z}$ is a primitive $z$-th root of unity. By
definition, $B_{1}\geq 2t$ and $t>qB_{0}!$, thus $J=B_{1}!/B_{0}!\geq
B_{1}>t\geq z$. This means we can choose any of the $z$-th roots of unity as
the value for $g(\lambda _{1})$. If we make a choice that is closest to $%
t_{1}$, then the angular difference between $g(\lambda _{1})$ and $t_{1}$ is
at most $\pi /z$ and thus 
\begin{equation*}
|g(\lambda _{1}+\pi _{\alpha }(\chi _{1}))-1|=|g(\lambda _{1})-t_{1}|<\frac{%
\pi }{z}<\frac{\pi }{q}<\varepsilon .
\end{equation*}

We proceed to define $\{\omega _{B_{n}}\}$ inductively. Assume that for $%
1\leq j\leq n$ we have specified $\omega _{B_{j}}$ so that $|g(\lambda
_{j}+\pi _{\alpha }(\chi _{j}))-t_{j}|<\varepsilon $. By the definition of $%
B_{n}$ we know that $\pi _{\alpha }(\chi _{n+1})$ is in $\frac{1}{B_{n}!}%
\mathbb{Z}$. Therefore $g$ has already been specified at $\pi _{\alpha
}(\chi _{n+1})$. However, $\lambda _{n+1}\notin $ $\frac{1}{B_{n}!}\mathbb{Z}%
,$ but is inside $\frac{1}{B_{n+1}!}\mathbb{Z}$, so the selection of $\omega
_{B_{n+1}}$ will determine $g(\lambda _{n+1})$.

By Equation (\ref{E:consistent}), $\omega _{B_{n+1}}$ can be chosen to be
any $J$-th root of $\omega _{B_{n}},$ where $J=B_{n+1}!/B_{n}!$. If we write 
$e^{i\theta }$ for $\omega _{B_{n}},$ then we are free to choose any integer 
$K\in \lbrack 0,J-1]$ and define 
\begin{equation*}
\omega _{B_{n+1}}=e^{i\theta /J}\cdot e^{2\pi iK/J}.
\end{equation*}%
Let $s/t$ be the reduced form of $\lambda _{n+1}$ and $y/z$ the reduced form
for $B_{n}!/t$. Since $t$ divides $B_{n+1}!,$ we have 
\begin{equation*}
\begin{aligned} g(\lambda_{n+1})&=\omega_{B_{n+1}}^{sB_{n+1}!/t}=e^{i \theta
sB_n!/t} \cdot e^{2 \pi i KsB_n!/t}=e^{i \theta sB_n!/t} \cdot e^{2 \pi i
Ksy/z}. \end{aligned}
\end{equation*}%
As before we see that any of the $z$-th roots of unity can be used to help
define $g(\lambda _{n+1})$ and we make the choice (of $K$) so that the
corresponding $z$-th root of unity differs in angle by at most $\pi /q$ from 
\begin{equation*}
\left( e^{i\theta sB_{n}!/t}\right) ^{-1}g(\pi _{\alpha }(\chi
_{n+1}))^{-1}t_{n+1}.
\end{equation*}%
(We remind the reader that the first two factors above are known as they
have already been determined by $\omega _{B_{n}}$.) Therefore 
\begin{equation*}
\begin{aligned} \left|g(\lambda_{n+1}+\pi_\alpha
(\chi_{n+1}))-t_{n+1}\right|&=\left|e^{2 \pi i Ksy/z}-\left(e^{i \theta
sB_n!/t}\right)^{-1} g(\pi_\alpha \chi_{n+1})^{-1} t_{n+1}\right|\\
&<\frac{\pi}{z} <\frac{\pi}{q} <\epsilon. \end{aligned}
\end{equation*}%
Since the choice of $g$ satisfies (\ref{charg}), it follows that $V^{\prime
} $ is $\varepsilon $-Kronecker.

The proof that $V$ is $\varepsilon $-Kronecker is similar, but easier, as
the factors $g(\pi _{\alpha }(\chi _{n+1}))$ are not present. This shows
that condition (a) also holds and that completes the proof of the Lemma.
\end{proof}

\begin{lemma}
\label{L:caseCPinfinity} Suppose there exists an index $\alpha \in \mathcal{I%
}$ such that $\pi _{\alpha }(F)$ is infinite and $\Omega _{\alpha
}=C(p^{\infty })$. Given any $\varepsilon >0,$ there are infinite disjoint
sets $E\subset F$ and $E^{\prime }\subset F+F-F$ such that $E,$ $E^{\prime }$
are weak $\varepsilon $-Kronecker and $I_{0}$, but $E\cup E^{\prime }$ is
not $I_{0}$.
\end{lemma}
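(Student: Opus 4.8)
The plan is to mirror the structure of Lemma \ref{L:caseQ}, replacing the denominator analysis in $\mathbb{Q}$ by the corresponding analysis of orders of roots of unity in $C(p^\infty)$. As before, I would begin with the clustering set $H=\bigcup_m H_m=\{\chi_n\}\subseteq(F-F)\setminus\{0\}$ from Lemma \ref{clustering}, and reduce the whole lemma to constructing characters $\lambda_n\in\pi_\alpha(F)$ satisfying the two conditions (a) and (b): namely that $V=\{\lambda_n\}$ and $V'=\{\lambda_n+\pi_\alpha(\chi_n)\}$ are weak $\varepsilon$-Kronecker in $\Omega_\alpha=C(p^\infty)$, and that the distinctness relations in (b) hold. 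Once these are in hand, the passage $E=\{\gamma_n\}\subseteq F$, $E'=\{\gamma_n+\chi_n\}\subseteq F+F-F$ via a lift $\pi_\alpha(\gamma_n)=\lambda_n$, the inheritance of the Kronecker property through Lemma \ref{tech}(i), and the failure of $I_0$ for the union through Lemma \ref{Mela}, go through verbatim as in the previous lemma.

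The natural coordinate here is the order (equivalently, the $p$-adic depth) of an element of $C(p^\infty)$: identify $C(p^\infty)$ with $\mathbb{Z}(p^\infty)=\{a/p^k+\mathbb{Z}\}$, and for $x\in C(p^\infty)$ write $x=a/p^k$ in reduced form ($p\nmid a$), so that $k$ is the ``level'' playing the role that $D(x)$ played over $\mathbb{Q}$. The dual group $\widehat{C(p^\infty)}$ is the group of $p$-adic integers, and a character $g$ is specified by a compatible sequence of values $g(1/p^k)=\omega_k$ with $\omega_{k+1}^p=\omega_k$; this is exactly the $p$-adic analogue of the consistency condition (\ref{E:consistent}), but now cleaner since every level index is a power of $p$. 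First I would choose $q$ with $\pi/q<\varepsilon$ and then, since $\pi_\alpha(F)$ is infinite and $C(p^\infty)$ has elements of every level, inductively select $\lambda_{n+1}$ of level strictly larger (by a controlled gap depending on $q$ and on the levels of $\lambda_1,\dots,\lambda_n$ and $\pi_\alpha(\chi_1),\dots,\pi_\alpha(\chi_{n+1})$) than everything seen so far, guaranteeing (b) and ensuring that when $g$ has already been determined up to a given level, the free choice of the next root of unity $\omega_{k_{n+1}}$ ranges over at least $z>q$ distinct $p$-power roots, so a best choice lands within angle $\pi/q<\varepsilon$ of the required target $(\text{known factor})^{-1}\,g(\pi_\alpha(\chi_{n+1}))^{-1}t_{n+1}$.

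I expect the main obstacle to be precisely the arithmetic bookkeeping of levels: I must verify that $\pi_\alpha(\chi_{n+1})$ already lies below the level at which $\lambda_{n+1}$ first appears, so that $g(\pi_\alpha(\chi_{n+1}))$ is already fixed by the earlier inductive choices, while the new choice of $\omega$ genuinely determines $g(\lambda_{n+1})$ with enough freedom. Over $\mathbb{Q}$ this required the factorials $B_n!$ and the reduced forms of $B_0!/t$; over $C(p^\infty)$ the analogue is simpler because all levels are powers of a single prime $p$, so the ``$J$-th root'' step has $J$ a pure power of $p$ and the number of available roots of $\omega_{k_n}$ is exactly $p^{(\text{level gap})}$, which I arrange to exceed $q$. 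The one genuinely new point to check is that $C(p^\infty)$ really does contain elements of arbitrarily large level inside $\pi_\alpha(F)$: this follows because $\pi_\alpha(F)$ is an infinite subset of $C(p^\infty)$, and the only infinite subsets of $C(p^\infty)$ are those meeting cofinally many levels (each level contributes only finitely many elements). With that observation the induction proceeds exactly as before, and the $\varepsilon$-Kronecker estimate for $V$ follows from the same argument with the factors $g(\pi_\alpha(\chi_{n+1}))$ deleted, completing the proof.
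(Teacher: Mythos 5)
Your proposal is correct and is essentially the argument the paper gives: the paper identifies $C(p^{\infty})$ with a subgroup of $\mathbb{Q}/\mathbb{Z}$, notes that the minimal denominators occurring in $\pi _{\alpha }(F)$ must be unbounded, and then simply reuses the unbounded-denominator induction from the second part of Lemma \ref{L:caseQ}, whose interpolating characters were arranged to satisfy $g(\mathbb{Z})=1$ and hence descend to $\mathbb{Q}/\mathbb{Z}$ and restrict to $C(p^{\infty})$. Your intrinsic version --- levels $p^{k}$ in place of denominators, $\widehat{C(p^{\infty})}=\mathbb{Z}_{p}$ in place of $\widehat{\mathbb{Q}}$, and the observation that each level of $C(p^{\infty})$ is finite so that an infinite $\pi _{\alpha }(F)$ has unbounded orders --- is the same induction with cleaner arithmetic, and the points you flag (that $g(\pi _{\alpha }(\chi _{n+1}))$ is already determined at the earlier levels, and that the $p$-power many available roots at the new level can be made to exceed $q$) all check out.
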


\begin{proof}
Let $H=\{\chi _{n}\}\subseteq (F-F)\diagdown \{0\}$ be a countable set that
clusters at $0$, as in the previous lemma. We identify $C(p^{\infty })$ with
a subgroup of $\mathbb{Q}/\mathbb{Z},$ so that for every $\lambda $ in the
subgroup generated by $F$ there is some $x_{\lambda }\in \mathbb{Q}$ such
that $\pi _{\alpha }(\lambda )=x_{\lambda }+\mathbb{Z}$.

Because $\pi _{\alpha }(F)$ is infinite, the set of minimal denominators $%
\{D(x_{\lambda })\,:\,\lambda \in F\}$ must be unbounded. The proof of the
second part of Lemma \ref{L:caseQ} shows that given $0<\varepsilon <\sqrt{2}$
there is a sequence $V=\{x_{\lambda _{n}}\}$ such that both $V$ and $%
V^{\prime }=\{x_{\lambda _{n}}+x_{\chi _{n}}\}$ are $\varepsilon $-Kronecker
sets in $\mathbb{Q},$ with the interpolation being done by characters $g\in 
\widehat{\mathbb{Q}}$ such that $g(\mathbb{Z})=1$, and hence by characters
on $\mathbb{Q}/\mathbb{Z}$. These can also be viewed as characters on $%
C(p^{\infty })$ if the domain is suitably restricted. Of course, $g(\pi
_{\alpha }(\lambda _{n}))=g(x_{\lambda _{n}}+\mathbb{Z})=g(x_{\lambda _{n}})$
and $g(\pi _{\alpha }(\lambda _{n}+\chi _{n}))=g(x_{\lambda _{n}}+x_{\chi
_{n}})$. It follows that both $\{\pi _{\alpha }(\lambda _{n})\}$ and $\{\pi
_{\alpha }(\lambda _{n}+\chi _{n})\}$ are $\varepsilon $-Kronecker subsets
of $\Omega _{\alpha }$.

The construction in the proof of the previous lemma also ensures that the
pullbacks, $E=\{\lambda _{n}\}$ and $E^{\prime }=\{\lambda _{n}+\chi _{n}\},$
are disjoint, have distinct terms and are $\varepsilon $-Kronecker in $%
\Gamma .$ That their union is not $I_{0}$ follows immediately from Lemma \ref%
{Mela}.
\end{proof}

\begin{remark}
We note that similar arguments can be used to prove that under the
assumption that $\pi _{\alpha }(F)$ is infinite for some $\alpha $ there are
infinite sets, $E,E^{\prime }$, that are $I_{0}$ and have the property that
for each $\varepsilon >0$ the sets $E$ and $E^{\prime }$ contain cofinite
subsets that are $\varepsilon $-Kronecker and whose union is not $I_{0}$.
The latter statement is a consequence of the fact that the union of an $%
I_{0} $ set and a finite set is known to be $I_{0}$ (see \cite[p.63]{GH13}).
\end{remark}

\subsection{Proof of the Theorem in Case 2}

\begin{lemma}
\label{L:many factors} Suppose $\pi _{\alpha }(F)$ is finite for all $\alpha
\in \mathcal{I}$, but that $\mathcal{I}_{q}$ is infinite for some $q\geq 2$,
where 
\begin{equation*}
\mathcal{I}_{q}=\{\alpha \in \mathcal{I}:\exists \lambda \in F\text{ s.t. $%
\pi _{\alpha }(\lambda )$ has order at least $q$}\}.
\end{equation*}%
Let $|\exp (i\pi /q)-1|=\varepsilon _{q}$. There are infinite disjoint sets $%
E\subset F$ and $E^{\prime }\subset F+F-F$ such that $E$ and $E^{\prime }$
are both weak $\varepsilon _{q}$-Kronecker and $I_{0}$, but $E\cup E^{\prime
}$ is not $I_{0}$.
\end{lemma}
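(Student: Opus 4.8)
The plan is to realise $\Gamma$ inside $\Omega=\bigoplus_{\alpha}\Omega_{\alpha}$ via the structure theorem, to fix once and for all a countable set $H=\{\chi_{n}\}\subseteq (F-F)\setminus\{0\}$ clustering at $0$ (Lemma~\ref{clustering}), and to construct $E=\{\gamma_{n}\}\subseteq F$ together with $E^{\prime}=\{\gamma_{n}+\chi_{n}\}\subseteq F+F-F$. Once $E,E^{\prime}$ are disjoint and $\{\chi_{n}\}$ clusters at $0$, Lemma~\ref{Mela} gives for free that $E\cup E^{\prime}$ is not $I_{0}$, so the whole task reduces to arranging that $E$ and $E^{\prime}$ are each weak $\varepsilon_{q}$-Kronecker and $I_{0}$. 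A character of $\Omega$ is a tuple $(x_{\alpha})$ acting by $\gamma(x)=\prod_{\alpha}\pi_{\alpha}(\gamma)(x_{\alpha})$, so I intend to do all interpolation in suitable coordinate projections of $\Omega$ and descend via Lemma~\ref{tech}(i). The guiding point is that a coordinate $\beta$ with $\pi_{\beta}(\gamma)$ of order $d\geq q$ lets me realise, as $x_{\beta}$ varies, every $d$-th root of unity at $\gamma$, and these are within angle $\pi/d\leq\pi/q$ of any point of $\mathbb{T}$; this is exactly what produces interpolation error at most $\varepsilon_{q}$.

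When $q\geq 3$ one has $\varepsilon_{q}\leq |e^{i\pi/3}-1|=1<\sqrt{2}$, so weak $\varepsilon_{q}$-Kronecker already forces $I_{0}$ by \cite{GH1} and no independence is needed. Here I would run a triangular (staircase) induction. Enumerating $H=\{\chi_{n}\}$, at step $n$ I choose a pivot index $\beta_{n}\in\mathcal{I}_{q}$ avoiding the finitely many coordinates in $\bigcup_{j<n}(\mathrm{supp}\,\gamma_{j}\cup\mathrm{supp}\,\chi_{j})$ together with $\mathrm{supp}\,\chi_{n}$ (possible since $\mathcal{I}_{q}$ is infinite), and then take $\gamma_{n}\in F$ with $\pi_{\beta_{n}}(\gamma_{n})$ of order $\geq q$. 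This guarantees $\beta_{k}\notin\mathrm{supp}\,\gamma_{j}\cup\mathrm{supp}\,\chi_{j}$ whenever $k>j$, while $\pi_{\beta_{n}}(\gamma_{n})=\pi_{\beta_{n}}(\gamma_{n}+\chi_{n})$ has order $\geq q$. Projecting onto the pivots $\mathcal{B}=\{\beta_{n}\}$ gives lower-triangular images with good diagonal entries, so $\pi_{\mathcal{B}}$ is injective on $E$ and on $E^{\prime}$; interpolating one pivot at a time (when $x_{\beta_{n}}$ is chosen, the contribution of the already-fixed earlier pivots is a known unimodular constant and the choice does not disturb the earlier $\gamma_{j}$), both $\pi_{\mathcal{B}}(E)$ and $\pi_{\mathcal{B}}(E^{\prime})$ come out weak $\varepsilon_{q}$-Kronecker. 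Lemma~\ref{tech}(i) then transfers this to $E,E^{\prime}$, and projecting to the larger of two pivots shows $E,E^{\prime}$ are disjoint with distinct terms.

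If $q=2$ and $\mathcal{I}_{3}$ is infinite I would simply rerun the construction with $q^{\prime}=3$, obtaining weak $\varepsilon_{3}$-Kronecker (hence weak $\sqrt{2}=\varepsilon_{2}$-Kronecker) $I_{0}$ sets. The genuinely different situation is $q=2$ with $\mathcal{I}_{3}$ finite. A short structural analysis then shows that, outside a finite set of indices, every $\alpha\in\mathcal{I}_{2}$ has $\Omega_{\alpha}=C(2^{\infty})$ with $\pi_{\alpha}(F)\cong\mathbb{Z}/2$ (an order-$\leq 2$ projection forces the prime to be $2$, and $\mathbb{Q}$ is torsion free). Let $\mathcal{A}$ be this infinite index set and $\rho=\pi_{\mathcal{A}}$; then $\rho(F)$ is an infinite subset of the $\mathbb{F}_{2}$-vector space $\bigoplus_{\mathcal{A}}\mathbb{Z}/2$, and the same holds for differences, so $\rho(\chi_{n})\in\bigoplus_{\mathcal{A}}\mathbb{Z}/2$ as well. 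I then run a single induction over the enumeration $H=\{\chi_{n}\}$: at step $n$ I pick $u_{n}=\rho(\gamma_{n})\in\rho(F)$ outside the finitely many points and affine $\mathbb{F}_{2}$-subspaces that would make $\{\rho(\gamma_{j})\}_{j\leq n}$ dependent, make $\{\rho(\gamma_{j}+\chi_{j})\}_{j\leq n}$ dependent, or violate $\rho(\gamma_{n})\neq\rho(\gamma_{m}+\chi_{m})$; since $\rho(F)$ is infinite this is always possible, and I realise $u_{n}$ by some $\gamma_{n}\in F$. The resulting $\rho(E)$ and $\rho(E^{\prime})$ are independent sets of order-two characters, hence weak $\sqrt{2}$-Kronecker and $I_{0}$ in $\bigoplus_{\mathcal{A}}\Omega_{\alpha}$ (independence $\Rightarrow I_{0}$, \cite{GH13}); as $\rho$ is injective on each set, Lemma~\ref{tech}(i) shows $E,E^{\prime}$ are $I_{0}$ and weak $\varepsilon_{2}$-Kronecker in $\Gamma$, and the disjointness constraints built into the induction finish the combinatorics.

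In every case $\{\chi_{n}\}$ runs over all of $H$, so it clusters at $0$ and Lemma~\ref{Mela} yields that $E\cup E^{\prime}$ is not $I_{0}$. The step I expect to be most delicate is the case $q=2$ with $\mathcal{I}_{3}$ finite: there weak $\sqrt{2}$-Kronecker alone cannot give $I_{0}$, so $I_{0}$ must come from genuine independence, and the real difficulty is keeping \emph{both} $E$ and $E^{\prime}$ independent simultaneously while $\{\chi_{n}\}$ is forced to exhaust $H$, leaving freedom only in the choice of $\gamma_{n}$. Passing to the order-two projection $\rho$ and phrasing independence as $\mathbb{F}_{2}$-linear independence is what makes this tractable, since avoiding finitely many subspaces inside an infinite subset of a vector space is automatic; the one point that still needs care is the structural reduction ensuring that $\rho(F)$ is genuinely infinite and lands inside $\bigoplus_{\mathcal{A}}\mathbb{Z}/2$.
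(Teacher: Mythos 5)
Your proposal is correct and follows essentially the same route as the paper: the same staircase selection of pivot indices $\beta _{n}$ avoiding the finitely many coordinates in the supports of the previously chosen elements and of $\chi _{1},\dots ,\chi _{n}$, the same coordinate-by-coordinate interpolation giving weak $\varepsilon _{q}$-Kronecker (hence $I_{0}$ when $q\geq 3$), and the same fallback to independent sets of order-two elements when only $\mathcal{I}_{2}$ is infinite, with Lemma \ref{Mela} finishing the union. The one immaterial difference is in that last subcase, where the paper gets independence for free from the triangular structure of the very same pivot construction, while you re-derive it by a generic-position argument in $\bigoplus_{\mathcal{A}}\mathbb{Z}/2$ avoiding finitely many finite affine subspaces; both work.
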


\begin{proof}
Again, let $H=\{\chi _{n}\}_{n=1}^{\infty }\subseteq (F-F)\diagdown \{0\}$
be a countable set that clusters at $0$.

First, assume $\mathcal{I}_{q}$ is infinite for some $q\geq 3$. Let $\beta
_{1}\in \mathcal{I}$ be chosen with the property that $\pi _{\beta
_{1}}(\chi _{1})=0$ and there is some $\lambda _{1}\in F$ with $\pi _{\beta
_{1}}(\lambda _{1})$ having order $\geq q$. We can do this since there are
only finitely many indices $\beta $ with $\pi _{\beta }(\chi _{1})\neq 0$.
Now inductively choose $\beta _{n}\in \mathcal{I}$ and $\lambda _{n}\in F$
such that $\pi _{\beta _{n}}(\lambda _{n})$ has order at least $q$, 
\begin{equation*}
\pi _{\beta _{n}}(\chi _{m})=0\text{ for all }m\leq n\text{ and }\pi _{\beta
_{n}}(\lambda _{m})=0\text{ for all }m<n\text{.}
\end{equation*}%
Set $\Pi $ equal to the projection from $\Omega $ onto $\Lambda =\bigoplus
\Omega _{\beta _{n}}$. Then $\{\Pi (\lambda _{n})\}$ and $\{\Pi (\lambda
_{n}+\chi _{n})\}$ are sequences of distinct elements of $\Lambda $ such
that $\Pi (\lambda _{m})\neq \Pi (\lambda _{n}+\chi _{n})$ if $m\neq n$.
Since $\chi _{n}\neq 0$, the sets $E=\{\lambda _{n}\}$ and $E^{\prime
}=\{\lambda _{n}+\chi _{n}\}$ are disjoint. Moreover $\Pi (\lambda _{n}+\chi
_{n})=\pi _{\beta _{n}}(\lambda _{n})+\rho _{n}$ where $\pi _{\beta
_{k}}(\rho _{n})=0$ for all $k\geq n$.

We claim that $\{\Pi (\lambda _{n}+\chi _{n})\}$ is weak $\varepsilon _{q}$%
-Kronecker. To prove this, note that a character $g$ on $\Lambda $ is
specified as $g=\{g_{n}\}$, with each $g_{n}$ a character on $\Omega _{\beta
_{n}}$. Let $M_{n}$ be the order of $\pi _{\beta _{n}}(\lambda _{n})$. For
any $M_{n}$-th root of unity, $\omega _{n},$ there is a character $g_{n}$
such that $g_{n}(\pi _{\beta _{n}}(\lambda _{n}))=\omega _{n}$. Thus given $%
\{t_{n}\}\subseteq \mathbb{T}$, we may inductively specify $g_{n}$ so that 
\begin{equation*}
|g(\pi _{\beta _{n}}(\lambda _{n}))-g(\rho _{n})^{-1}t_{n}|\leq |\exp (i\pi
/M_{n})-1|\leq \varepsilon _{q}.
\end{equation*}%
Thus%
\begin{equation*}
\left\vert g(\Pi (\lambda _{n}+\chi _{n}))-t_{n}\right\vert =|g(\pi _{\beta
_{n}}(\lambda _{n}))g(\rho _{n})-t_{n}|\leq \varepsilon _{q},
\end{equation*}%
which proves the claim.

The argument that $\{\Pi (\lambda _{n})\}$ is weak $\varepsilon _{q}$%
-Kronecker is similar. By Lemma \ref{tech}, $E$ and $E^{\prime }$ are also
weak $\varepsilon _{q}$-Kronecker. As $q\geq 3,$ we have $\varepsilon
_{q}\leq 1$ and hence $E$ and $E^{\prime }$ are $I_{0}$. As before, their
union is not $I_{0}$.

Otherwise, we can assume $\mathcal{I}_{2},$ but not $\mathcal{I}_{3},$ is
infinite. Then there is a finite (possibly empty) set $\mathcal{J\subseteq I}
$ such that for all $\lambda \in F$ and all $\alpha \in \mathcal{I\diagdown J%
}$, either $\pi _{\alpha }(\lambda )=0$ or has order $2$. Repeat the
construction as above, but this time with the additional requirement that $%
\beta _{n}\in \mathcal{I\diagdown J}.$ As before, the sets $E,$ $E^{\prime }$
that arise from the construction are disjoint and they are both weak $\sqrt{2%
}$-Kronecker. Moreover, $\Pi (E)$ and $\Pi (E^{\prime })$ are both
independent sets of elements of order $2$ and one can easily verify that
such sets are $I_{0}$ (c.f. \cite[p. 66]{GH13}). It follows from Lemma \ref%
{tech} that $E$ and $E^{\prime }$ are both $I_{0}$ sets, while their union
is not.
\end{proof}

These three lemmas complete the proof of the main theorem since the
assumption that $F$ is infinite guarantees that either $\pi _{\alpha }(F)$
is infinite for some $\alpha $, or there are infinitely many indices $\alpha 
$ with $\pi _{\alpha }(F)$ not trivial and in that case $\mathcal{I}_{q}$ is
infinite for some $q\geq 2$.

\end{document}